\newtheorem{theorem}{Theorem}[section]
\newtheorem{lemma}[theorem]{Lemma}
\theoremstyle{definition}
\newtheorem{definition}[theorem]{Definition}
\newtheorem{example}[theorem]{Example}
\newtheorem{prob}[theorem]{Problem}
\theoremstyle{remark}
\numberwithin{equation}{section}
\begin{document}

\title[Germs of extremal K\"ahler metrics]{The space of germs of extremal K\" ahler metrics in one dimension comprises three distinct ${\Bbb R}^3$ components}

\author{Qing Chen}
\address{School of Mathematical  Sciences, University of Science and Technology of China, Hefei 230026 China}
\email{qchen@ustc.edu.cn}

\author{Yiqian Shi}
\address{School of Mathematical Sciences, University of Science and Technology of China, Hefei 230026 China}
\email{yqshi@ustc.edu.cn}



\author{Bin Xu$^\dagger$}
\address{School of Mathematical Sciences, University of Science and Technology of China, Hefei 230026 China}
\email{bxu@ustc.edu.cn}

\thanks{Q.C is supported in part by NSFC (Grant No. 11971450).
Y.S. is supported in part by NSFC
(Grant No. 11931009) and Anhui Initiative in Quantum Information Technologies (Grant No. AHY150200).
B.X. is supported in part by the Project of Stable Support for Youth Team in Basic Research Field, CAS (Grant No. YSBR-001) and NSFC (Grant Nos. 12271495, 11971450, and 12071449). }
\thanks{$^\dagger$B.X. is the corresponding author.}

\subjclass[2020]{Primary 58E11; Secondary 53B35}

\date{}

\begin{abstract} 
In the 1980s, Eugenio Calabi introduced  the concept of {\it extremal K\" ahler metrics} as critical points of 
the $L^2$-norm functional of scalar curvature in the space of K\" ahler metrics belonging to a fixed K\"ahler class of a compact complex manifold $X$. Calabi demonstrated that extremal K\" ahler metrics always degenerate into Einstein metrics on compact Riemann surfaces. 
We define a K\"ahler metric $g$ on a domain of ${\Bbb C}^n$ as a {\it local extremal K\"ahler metric} of dimension $n$ if it satisfies the Euler-Lagrange equation of this functional, i.e. holomorphic is
the $(1,0)$-part of the gradient vector field of the scalar curvature of $g$,  in the domain.
Our main result establishes that the space of all germs of local extremal, non-Einstein K\"ahler
metrics of dimension one comprises three components, each diffeomorphic to ${\Bbb R}^3$.
 
\end{abstract}

\dedicatory{}


\maketitle

\section{Introduction}
In Differential Geometry, a key challenge is finding canonical metrics on a given manifold, with various manifestations. Notably, the uniformization theorem for Riemann surfaces asserts that each Riemann surface admits a unique complete Kähler Einstein metric.  To find canonical K\" ahler metrics on a compact complex manifold $X$ of dimension $n$, in the 1980s,  Eugenio Calabi \cite{Ca1982, Ca1985} introduced the seminal concept of {\it extremal K\" ahler metric} as a critical point of the $L^2$-norm energy functional $\int_X S_g^2\, \omega_g^n$
of scalar curvature $S_g$ in the space of K\" ahler metrics $g$ belonging to a fixed K\" ahler class $[\omega_g]$ on $X$. 
Calabi's work initiated a rapidly growing literature, explored further in Gábor Székelyhidi's 2014 textbook \cite{Sze2014}  on extremal K\"ahler metrics.

Calabi demonstrated that  a K\"ahler metric $g=(g_{j\bar k})$ on $X$ is {\it extremal} if and only if the $(1,0)$-part 
${\rm grad}^{1,0} S_g:=\sum_{j,k}\,g^{j\bar k}\frac{\partial S_g}{\partial \overline{z_k}}\frac{\partial}{\partial z_j}$
of the Riemannian gradient vector field of the scalar curvature $S_g$ of $g$ is a holomorphic vector field on $X$.
(\cite[Theorem 2.1]{Ca1982} and \cite[Theorem 4.2]{Sze2014}). This holomorphic vector field is termed  the {\it complex gradient vector field} of $S_g$.
As a consequence, extremal K\" ahler metrics inherently possess constant scalar curvature provided that $X$ admits no nontrivial holomorphic vector field.
Furthermore, they become Einstein metric if K\"ahler class is proportional to $c_1(X)$. 
A K\"ahler metric $g=(g_{j\bar k})$ on a domain $\Omega\subset {\Bbb C}^n$ is defined as a {\it local extremal K\" ahler metric} of dimension $n$ if and only if ${\rm grad}^{1,0} S_g$
is holomorphic in $\Omega$. Fixing a point in $\Omega$, say $z=0$, we consider two local extremal K\"ahler metrics $g_1$ and $g_2$ in domains $\Omega_1\ni 0$ and $\Omega_2\ni 0$ respectively, to be {\it equivalent} if and only if $g_1$ is isometric to $g_2$ in some neighborhood of $0$. This is expressed
through  a holomorphic coordinate change $z\mapsto \Phi(z)$ near $z=0\in \Omega_1$, satisfying 
$\Phi(0)=0$ and $\Phi^* g_2=g_1$ near $z=0$. 
An equivalence class of local extremal K\"ahler metrics living in domains containing $0\in {\Bbb C}^n$ is termed  a {\it germ of local extremal K\"ahler metrics of dimension $n$}, or simply an {\it extK germ}. Analogously, similar concepts arise for 
 {\it germs of local K\"ahler Einstein metrics} and {\it germs of local K\"ahler metrics of constant scalar curvature} of dimension $n$.

We shall focus on (local) K\"ahler metrics of dimension one in the following. 
Joseph Liouville (\cite{Liou1853} and \cite[pp. 26-29]{Ban1980}) established that  all germs of local K\"ahler Einstein metric of dimension one  form the real one-parameter family of $\frac{4|{\rm d}z|^2}{\big(1+K_0|z|^2\big)^2},\,  K_0\in {\Bbb R}$. 
Additionally,  Eugenio Calabi \cite[Theorem 3.1]{Ca1982} demonstrated that extremal metrics on compact Riemann surfaces always have constant curvature.
Local K\"ahler extremal metrics of dimension one are closely connected to the research program of {\it extremal Hermitian metrics and HCMU metrics} on compact Riemann surfaces, initiated by Xiuxiong Chen \cite{Chen1999, Chen2000} in the 1990s to extend the uniformization theorem to Riemann surfaces with boundary. 
An {\it extremal Hermitian metric}  on a compact Riemann surface 
is a conformal metric $g$ with finitely many cone or cusp singularities prescribed a priori,  satisfying the Euler-Lagrange equation of the $L^2$-norm energy functional of Gaussian curvature in a suitable $H^{2,2}$ space of conformal metrics with prescribed singularities (\cite[(3)]{Chen1999} and \cite[(2.1-2)]{Chen2000}). 
This equation reads on the Riemann surface  
$\frac{\partial}{\partial \bar z}\, K_{g,\, zz}=0$, 
i.e. the $(2,0)$-part of the real Hessian of the Gaussian curvature $K_g$ of $g$ is holomorphic, outside the singularities. 
An {\it HCMU metric} $g$ is 
an extremal Hermitian metric on a compact Riemann surface such that $K_{g,\,zz}$ vanishes outside the singularities of $g$
(\cite[(2.3)]{Chen2000}). 
In particular, restrictions of HCMU metrics near their smooth points provide us an abundance of examples of extK germs of dimension one.
Following the foundational results by Xiuxiong Chen \cite{Chen1999, Chen2000}, various authors, including  Wang-Zhu \cite{WZ2000}, Lin-Zhu \cite{LZ2002}, Chen-Chen-Wu \cite{CCW2005},  Chen-Wu \cite{CW2009, CW2011} and Chen-Wu-Xu \cite{CWX2015}, obtained diverse classification and existence results about extremal Hermitian and HCMU metrics on compact Riemann surfaces. In particular, HCMU and non-Einstein metrics were reduced to an integrable system by Lin-Zhu \cite[Lemma 2.1.]{LZ2002} and were classified by Chen-Wu \cite[Theorem 2.2.]{CW2011} and Chen-Wu-Xu \cite[Theorems 1.1-1.7.]{CWX2015}. 
Motivated by the above,
we classify 1-dimensional extK germs, resulting in the following theorem:

\begin{theorem}
 The space ${\frak M}$ of all germs of 1-dimensional local K\"ahler extremal, non-Einstein metrics comprises three components, each diffeomorphic to ${\Bbb R}^3$.
\end{theorem}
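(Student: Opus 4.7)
The plan is to convert the 1-dimensional extK condition into an explicit local PDE, place the associated holomorphic vector field into a holomorphic normal form at $0$, and in each case reduce to an ODE whose analytic solution germs fill an $\mathbb{R}^3$. Writing $g=\lambda\,|dz|^2$, Calabi's characterization says $g$ is extremal exactly when $\partial_{\bar z}\bigl(S_{,\bar z}/\lambda\bigr)=0$; equivalently, $F(z) := S_{,\bar z}/\lambda$ is holomorphic and $V := F\,\partial_z$ is a holomorphic vector field whose $JV$ is a real Killing field of $g$. The non-Einstein hypothesis is $V\not\equiv 0$; since a Killing field on a Riemannian surface is determined by its $1$-jet at any point, $V$ can vanish at $0$ only with a simple zero (a zero of order $\ge 2$ would force $JV\equiv 0$ and return us to the Einstein case). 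Two mutually exclusive cases therefore arise: Case I with $V(0)\neq 0$, or Case II with $V(0)=0$ and $V'(0)=\alpha\neq 0$, where a reality/smoothness argument at the zero of the Killing field forces $\alpha\in\mathbb{R}^*$.

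In Case I, holomorphic straightening yields a coordinate $w$ with $V=\partial_w$ and no residual coordinate freedom; the Killing symmetry then forces $\lambda=\lambda(x)$ with $x=\operatorname{Re}(w)$, and the extK equation reduces to the third-order ODE $u'''=u'u''-2e^{2u}$ for $u := \log\lambda$, whose analytic solution germs at $x=0$ are parametrized freely by $(u(0),u'(0),u''(0))\in\mathbb{R}^3$. In Case II, Poincar\'e linearization yields $w$ with $V=\alpha w\,\partial_w$; rotational symmetry forces $\lambda=\lambda(|w|^2)$, and the extK condition becomes a corresponding $\alpha$-dependent, singular-regular third-order ODE whose germs at $t=|w|^2=0$ are parametrized, after quotienting by the residual biholomorphic freedom $w\mapsto c w$, by three real numbers. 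The sign of $\alpha$, biholomorphically invariant, partitions Case II into two disjoint pieces $\mathrm{II}^+$ and $\mathrm{II}^-$. Thus $\mathfrak{M}=\mathrm{I}\sqcup\mathrm{II}^+\sqcup\mathrm{II}^-$ with each of the three components diffeomorphic to $\mathbb{R}^3$.

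The main obstacle is Case II, where the singular-regular ODE at $t=0$ must be analyzed through its indicial structure to identify the three free real parameters and to verify that the rescaling $w\mapsto cw$ acts in a way that leaves genuinely three biholomorphic invariants rather than reducing the count. An explicit triple of invariants, for instance $\alpha$ together with two intrinsic Taylor-type scalars of $\lambda$ at $0$, will have to be constructed so as to produce the required diffeomorphism of each of $\mathrm{II}^{\pm}$ with $\mathbb{R}^3$; the three pieces $\mathrm{I}$, $\mathrm{II}^{+}$, $\mathrm{II}^{-}$ must furthermore be shown to be open and closed in $\mathfrak{M}$, rather than mere strata of possibly differing dimension, by checking the invariants $\operatorname{ord}_0 V$ and $\operatorname{sgn}\alpha$ are locally constant under smooth deformation of extK germs.
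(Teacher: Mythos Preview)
Your overall architecture coincides with the paper's: you split according to whether the holomorphic vector field $V=\mathrm{grad}^{1,0}S_g$ vanishes at the origin, obtaining exactly the paper's \emph{generic} (your Case~I) and \emph{exceptional} (your Case~II, split by $\operatorname{sgn}\alpha$) germs. Your Killing-field argument that $V$ can have at most a simple zero is cleaner than the paper's proof of the same fact (the paper argues by contradiction through a case analysis on the multiplicity of $K_0$ as a root of a cubic), and it is correct: $J\nabla S_g$ is Killing for an extremal K\"ahler metric, and a Killing field on a surface is determined by its $1$-jet.

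The genuine gap is Case~II, and it is precisely the gap you flag yourself. The paper does not attack the raw third-order singular ODE. Instead it integrates once using $K_{,ww}=0$ and then once more using \eqref{eqn:extH_C2} to produce two real first integrals $C,C'$, after which the system collapses to the explicit first-order form \eqref{eqn:hcmus_o2}: $\partial K=\sigma\,p(K)\,dz/z$ and $g=4\sigma^2 p(K)\,|dz|^2/|z|^2$ with $p(t)=-t^3/3+Ct+C'$. From this the necessary constraints $p(K_0)=0$, $p'(K_0)\neq 0$, and $\sigma=1/p'(K_0)$ drop out transparently, and the remaining freedom is a single positive ``fifth invariant'' $\lambda=\lim_{z\to 0}(\operatorname{sgn}\sigma)(K-K_0)/|z|^2$. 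The $\mathbb{R}^3$ then appears as $(K_0,\,p'(K_0),\,\lambda)$ with $p'(K_0)\in\mathbb{R}^\times$ giving the two components. Your indicial-analysis program for the third-order singular ODE would have to recover exactly this count \emph{and} show that the rescaling $w\mapsto cw$ kills one and only one degree of freedom; without the first integrals this is substantially harder, and as written you have only asserted, not shown, that three invariants survive. The first-integral reduction is the key idea you are missing.

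Two smaller points. First, your Case~I ODE $u'''=u'u''-2e^{2u}$ does not look right: straightening $V$ to $\partial_w$ forces $K'/\lambda$ to be a fixed nonzero constant, and with $K=-e^{-2u}u''$ this reads $u'''=2u'u''+c\,e^{4u}$ for an intrinsic constant $c$; the coefficients and the exponent in your version are off. This does not affect the $\mathbb{R}^3$ count, but you should recompute. Second, the paper does not actually topologize $\mathfrak{M}$ or prove the three pieces are topological components; ``components'' in the statement means the explicit disjoint-union decomposition $\mathfrak{M}_{\rm gx}\sqcup\mathfrak{M}_{\rm ex}$, so your proposed local-constancy argument, while reasonable, is more than is being claimed.
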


We conclude this section by explaining how the left part of this manuscript is organized. 
In the next section,
depending on whether the Gaussian curvature of a germ achieves its extremal value at $z=0$ or not, we divide all germs in Theorem 1.1. into the two categories of
{\it exceptional} germs and {\it generic} ones. We prove in Section 3 two classification theorems (Theorems 3.1. and 3.2.) for these two classes, respectively, from which Theorem 1.1. follows immediately.  In the last section, we have a discussion about relevant open questions.

\section{Generic and exceptional germs} 
In this section, inspired by the work of Lin and Zhu \cite[Proposition 1.2]{LZ2002} and employing holomorphic coordinate transformations, we present Lemmas 2.1-2 that recast the partial differential equation $K_{g,,zz}=0$ satisfied by an extremal Kähler (extK) germ $g$ with nonconstant curvature. Lemma 2.2. yields two distinct differential systems, each comprising an ordinary differential equation involving $K_g$ and another equation expressing $g$ in terms of $K_g$. The choice between these two systems depends on whether the Gaussian curvature $K_g$ attains its extremal value at $z=0$ or not. This distinction facilitates the classification of germs into two categories: the {\it exceptional} and {\it generic} ones (as per Definition 2.3).

\begin{lemma}
\label{lem:hcmu_sm} 
Let $g=e^{2\phi}|{\rm d}w|^2$ be an extK germ at $w=0 \in {\Bbb C}$ and $K=K_g$ the Gaussian curvature of $g$ which is non-constant. Then 
\[0\not\equiv F(w)\frac{\partial }{\partial w}:=4K_{,w}\frac{\partial }{\partial w}=4e^{-2\phi}K_{\overline w}\frac{\partial }{\partial w}=
4e^{-2\phi}\frac{\partial K}{\partial {\overline w}}\frac{\partial }{\partial w}\]
is a holomorphic vector field near $w=0$. There exist two real numbers $C$ and $C'$ determined by $g$ such that  
\begin{equation}
\label{eqn:hcmus}
\left\{
\begin{split}
\frac{1}{F}&=\frac{K_w}{-\frac{K^3}{3}+CK+C'}\\
e^{2\phi}&=\frac{4\left(-\frac{K^3}{3}+CK+C'\right)}{|F|^2}
\end{split}
\right.\,.
\end{equation}
Moreover, if $F(0)=0$, then{\rm :}
\begin{itemize}
\item[(1)] $K_0:=K(0)$ is a  root of the cubic polynomial $-t^3/3+Ct+C'$; and 

\item[(2)] $F'(0)$ is a nonzero real number.
\end{itemize}
\end{lemma}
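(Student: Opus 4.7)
The plan is to use a holomorphic coordinate change that linearises the vector field $F\partial/\partial w$, reducing the problem to an ODE in one real variable, which can then be integrated explicitly.

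First, identify $F\partial/\partial w$ with the complex gradient of the scalar curvature. In complex dimension one, $g^{w\bar w} = 2e^{-2\phi}$ and $S_g = 2K$, so ${\rm grad}^{1,0} S_g = 4e^{-2\phi}K_{\bar w}\,\partial/\partial w = F\partial/\partial w$; this is holomorphic by the extK hypothesis, and nontrivial because $K$ is non-constant. In any region where $F \ne 0$ (a punctured neighborhood of $0$ if $F(0)=0$), introduce a new coordinate $\eta$ with $d\eta = dw/F(w)$, so that $\partial/\partial \eta = F\,\partial/\partial w$ and the metric becomes $e^{2\tilde\phi}|d\eta|^2$ with $e^{2\tilde\phi} = |F|^2 e^{2\phi}$. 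The defining relation $K_{\bar w} = Fe^{2\phi}/4$ then becomes $K_{\bar\eta} = e^{2\tilde\phi}/4$, which is real; combined with the reality of $K$, this forces $K$ and $\tilde\phi$ to depend only on $x := {\rm Re}\,\eta$, and the relation reduces to $e^{2\tilde\phi} = 2K'(x)$.

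Next, integrate the resulting ODE. Substituting $e^{2\tilde\phi} = 2K'$ into the Gaussian curvature equation $\tilde\phi'' = -Ke^{2\tilde\phi}$ gives $\tilde\phi'' = -2KK'$, which integrates to $\tilde\phi' = -K^2 + C$ for a real constant $C$. On the other hand, logarithmically differentiating $e^{2\tilde\phi} = 2K'$ yields $\tilde\phi' = K''/(2K')$, whence $K'' = 2(-K^2 + C)K'$; integrating once more produces $K'(x) = 2(-K^3/3 + CK + C')$ with a second real constant $C'$. Translating back to the $w$-coordinate via $K_w = K_\eta/F$ and $e^{2\phi} = e^{2\tilde\phi}/|F|^2$ yields the two equations of \eqref{eqn:hcmus}.

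For the remaining assertions, assume $F(0) = 0$. Since $e^{2\phi(0)}$ is finite and positive, the rearranged identity $|F|^2 e^{2\phi} = 4P(K)$ with $P(t) := -t^3/3 + Ct + C'$ forces $P(K_0) = 0$, proving (1). Differentiating $K_{\bar w} = Fe^{2\phi}/4$ at $w = 0$ gives $K_{w\bar w}(0) = F'(0)\,e^{2\phi(0)}/4$, which is real because $K$ is, so $F'(0) \in {\Bbb R}$. The remaining assertion that $F'(0) \ne 0$ is the main obstacle. The strategy is to suppose that $F$ vanishes at $0$ to order $k \ge 2$, compute the Taylor expansion of $K$ around $0$ via iterated application of $K_{\bar w} = Fe^{2\phi}/4$, and show that the first nonvanishing term in $K - K_0$ has size $|w|^{k+1}$ with nontrivial angular dependence $\propto {\rm Re}(a_k e^{i(k-1)\theta})$ in polar coordinates $w = re^{i\theta}$, where $a_k$ is the leading coefficient of $F$. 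Substituted into the identity $|F|^2 e^{2\phi}/4 = P(K)$ and combined with the sign constraint $P(K) \ge 0$, this angular oscillation is incompatible with $|F|^2 \sim |w|^{2k}$, yielding a contradiction through a short case analysis on the multiplicity of $K_0$ as a root of $P$ (simple, double, or triple).
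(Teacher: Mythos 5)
Your proposal is correct, and in its two substantive steps it follows a genuinely different route from the paper. For the derivation of the system \eqref{eqn:hcmus}, the paper quotes Chen's equivalence of $\partial_{\bar w}K_{,ww}=0$ with the second-order equation $4K_{w\bar w}=(C-K^2)e^{2\phi}$ and then extracts $C'$ by observing that $F\cdot\bigl(K_w-\tfrac{-K^3/3+CK}{F}\bigr)$ is a holomorphic and real-valued, hence constant, function; you instead linearise the holomorphic vector field by the coordinate $\eta$ with ${\rm d}\eta={\rm d}w/F$, observe that reality of $K_{\bar\eta}=e^{2\tilde\phi}/4$ forces everything to depend on ${\rm Re}\,\eta$ only, and integrate two first-order ODEs. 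This is the Lin--Zhu ``integrable system'' reduction done from scratch, and it is more self-contained (no appeal to Chen's Proposition 1), at the modest cost of a gluing remark you leave implicit: on a punctured neighbourhood $\eta$ is only locally defined, so one should note that the locally obtained constants $C,C'$ agree on overlaps (they do, because $F\not\equiv 0$ forces $K$ to be non-constant on every open set). For $F'(0)\ne 0$, the paper normalises ${\rm d}w/F$ near a zero of order $n>1$ via Strebel's theorem and integrates the ODE for $K$ to produce blow-up contradictions; you instead Taylor-expand $K-K_0$ directly from $K_{\bar w}=Fe^{2\phi}/4$, obtaining a leading term of size $|w|^{k+1}$ with angular factor ${\rm Re}(a_kw^{k-1})$, and compare with $4P(K)=|F|^2e^{2\phi}\asymp|w|^{2k}$: the sign change of $K-K_0$ kills the simple-root case, and the order count $2k+2>2k$ (resp.\ $3k+3>2k$) kills the double (resp.\ triple) root case. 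This avoids the external citation to Strebel entirely and is arguably more elementary; your direct proof that $F'(0)\in{\Bbb R}$ from the reality of $K_{w\bar w}(0)=F'(0)e^{2\phi(0)}/4$ is also cleaner than the paper's residue argument. Both approaches are valid; the paper's generalises more readily to higher-order normal forms of the one-form $\omega$, while yours keeps everything at the level of Taylor coefficients.
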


\begin{proof} The vector field $F(w)\frac{\partial }{\partial w}=4K_{,w}\frac{\partial }{\partial w}$ is a multiple of ${\rm grad}^{(1,0)} S_g$, which is holomorphic near $w=0$ 
by the definition of local K\"ahler extremal metric, i.e. 
\begin{equation}
\label{eqn:hcmu_C}
K_{,ww}=0\ \ {\rm near}\ \ w=0, \end{equation} 
from which we obtain the more general differential equation
\begin{equation}
\label{eqn:extH_C1}
\frac{\partial}{\partial \overline w}K_{,ww}=0\quad {\rm near }\quad w=0.
\end{equation}
$F(w)\frac{\partial }{\partial w}=4e^{-2\phi}K_{\overline w}\frac{\partial }{\partial w}$ does not vanish
identically since $K$ is non-constant. By \cite[Proposition 1, pp. 273-274]{Chen2000},  \eqref{eqn:extH_C1} is equivalent to the equation
\begin{equation}
\label{eqn:extH_C2}
4K_{w\overline w}=\big(C-K^2)e^{2\phi}\ \  {\rm near }\ \  w=0,
\end{equation}
where  $K_{w\overline{w}}:=\frac{\partial ^2 K}{\partial w\partial \overline w}$ and $C$ is some real constant determined by the germ $g$.

The validity of \eqref{eqn:hcmus} follows exactly from \cite[Lemma 2.1., pp. 188-190]{LZ2002} and \cite[Proposition 3.1, pp. 395-396]{CWX2015}. 
The details are as follows. Since $e^{2\phi}=\frac{4K_{\overline w}}{F}$ and $F$ is holomorphic, 
rewriting \eqref{eqn:extH_C2} as $K_{w\overline w}=\left(\frac{-K^3/3+CK}{F}\right)_{\overline w}$, we find that
$G:=K_w-\frac{-K^3/3+CK}{F}$ is holomorphic near $w=0$, and the holomorphic function $FG$ actually turns out to be some real constant, denoted by $C'$, since
$FG$ equals the real valued function $4e^{-2\phi}|K_w|^2+K^3/3-CK$ near $w=0$.
Hence, we obtain near $w=0$ 
\begin{equation} 
\label{eqn:hcmus_1}
K_w=\frac{-K^3/3+CK+C'}{F},\ \ {\rm i.e.}\ \ \frac{K_w}{-K^3/3+CK+C'}=\frac{1}{F}, 
\end{equation} 
and
\begin{equation}
\label{eqn:hcmus_2}  
e^{2\phi}=\frac{4K_{\overline w}}{F}=\overline{\left(\frac{-K^3/3+CK+C'}{F}\right)}\cdot\frac{4}{F} =\frac{4\left(-K^3/3+CK+C'\right)}{|F|^2},\end{equation}
both of which are identical to the system \eqref{eqn:hcmus}.

Assume that $F(0)=0$. Since $e^{2\phi}$ is smooth at $w=0$, $-K^3/3+CK+C'$ must vanish at $w=0$
by the preceding equation. 
Considering the multiplicity of $K_0=K(0)$ as a root of the cubic polynomial $-t^3/3+Ct+C'$, we will establish, through proof by contradiction in three distinct scenarios, that {\it $w=0$ is indeed a simple zero of the function $F(z)$}.
Suppose that $w=0$ has multiplicity $n>1$ as a zero of $F$. 
By \cite[Theorem 6.4]{Str1984}, using holomorphic coordinate change, we could simplify the one-form 
$\frac{{\rm d}w}{F}$ into the form of $\left(\frac{1-n}{w^n}+\frac{b}{w}\right){\rm d}w$ for some $b\in {\Bbb C}$ 
near $w=0$. Since extK germs and their Gaussian curvature functions are intrinsic objects in Differential Geometry, we may assume that  $\frac {1}{F(w)}=
\frac{1-n}{w^n}+\frac{b}{w}$ in the system \eqref{eqn:hcmus} that $g$ and $K=K_g$ satisfy. By \eqref{eqn:hcmus_1}, $b$ must be real since
\[\frac{{\rm d}K}{-K^3/3+CK+C'}=\left(\frac{1-n}{w^n}+\frac{b}{w}\right){\rm d}w
+\overline{\left(\frac{1-n}{w^n}+\frac{b}{w}\right){\rm d}w}\]
is a real differential one-form. 
\begin{itemize}
\item[(i)] Assume that $K_0$ is a triple root of $-t^3/3+Ct+C'$. Then $K_0=C=C'=0$. Rewriting \eqref{eqn:hcmus_1} as 
$
-3\frac{{\rm d}K}{K^3}=\left(\frac{1-n}{w^n}+\frac{b}{w}\right){\rm d}w
+\overline{\left(\frac{1-n}{w^n}+\frac{b}{w}\right){\rm d}w}
$
and integrating it, we obtain near $w=0$
\begin{equation*}
 \frac{3}{2K^2}=w^{1-n}+{\overline w}^{1-n}+b\ln\, |w|^2+{\rm Const}.
\end{equation*}
Taking the limit inferior at $w=0$ on both sides of the above equation, we obtain
$+\infty=-\infty$ since $n>1$. Contradiction!

\item[(ii)] Assume that $K_0$ is a double root of $-t^3/3+Ct+C'$. Since
$\frac{1}{K(w,\overline w)-K_0}$  equals a nonzero multiple of 
$\big(w^{1-n}+{\overline w}^{1-n}+b\ln\, |w|^2\big)$
plus a minor term near $w=0$, by \eqref{eqn:hcmus_2}, we obtain 
$\limsup_{w\to 0}\, e^{2\phi}=+\infty$. Contradiction!

\item[(iii)] Assume that $K_0$ is a simple root of $-t^3/3+Ct+C'$. Since $\ln\, |K(w,\overline w)-K_0|$ equals a nonzero  multiple of 
$\left(w^{1-n}+{\overline w}^{1-n}+b\ln\,|w|^2\right)$ plus a bounded term near $w=0$,  we obtain the same contradiction as (2). 
\end{itemize}
Therefore we already proved that $\sigma^{-1}:=F'(0)\not=0$.
The fact that {\it $\sigma$ is real} follows from a similar argument as that for $b\in {\Bbb R}$ above.

\end{proof}

We could use holomorphic coordinate change to simplify \eqref{eqn:hcmus} that extK germs satisfy in the following:

\begin{lemma}
\label{lem:hcmu_sm_2} 
Let $g=e^{2\phi}|{\rm d}w|^2$ be an extK germ at $w=0\in {\Bbb C}$ and have
{\rm nonconstant} curvature. Using some suitable holomorphic coordinate change $w\mapsto z(w)$ near $w=0$ such that $z(0)=0$,
we have that $g$ and its Gaussian curvature $K$ satisfy
either 
\begin{equation}
\label{eqn:hcmus_o1}
\left\{
\begin{split}
\partial K&=\left(-\frac{K^3}{3}+CK+C'\right){\rm d}z\\
g&=4\left(-\frac{K^3}{3}+CK+C'\right)|{\rm d}z|^2
\end{split}
\right.
\end{equation}
or 
\begin{equation}
\label{eqn:hcmus_o2}
\left\{
\begin{split}
\partial K&=\sigma\left(-\frac{K^3}{3}+CK+C'\right)\frac{{\rm d}z}{z}\\
g&=4\sigma^2\left(-\frac{K^3}{3}+CK+C'\right)\frac{|{\rm d}z|^2}{|z|^2}\\
0&=-K_0^3/3+CK_0+C'\quad{\rm where}\quad K_0=K(0)
\end{split}
\right.\, 
\end{equation}
\end{lemma}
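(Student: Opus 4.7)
The plan is to split into two cases according to whether $F(0)\ne 0$ or $F(0)=0$, and in each case to find a local holomorphic coordinate change $w\mapsto z(w)$ with $z(0)=0$ that puts the meromorphic one-form $\frac{{\rm d}w}{F(w)}$, which appears implicitly in both lines of \eqref{eqn:hcmus}, into a simple normal form. The first observation is that \eqref{eqn:hcmus} admits the coordinate-free rewriting
\[
\partial K=\left(-\frac{K^{3}}{3}+CK+C'\right)\frac{{\rm d}w}{F(w)},\qquad g=4\left(-\frac{K^{3}}{3}+CK+C'\right)\left|\frac{{\rm d}w}{F(w)}\right|^{2},
\]
in which $\partial K$, $K$ and $g$ are intrinsic, so only the factor ${\rm d}w/F(w)$ changes under coordinate transformation.

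In the first case $F(0)\ne 0$, the function $1/F$ is holomorphic at the origin, so the antiderivative $z(w):=\int_{0}^{w}\frac{{\rm d}w'}{F(w')}$ is a local biholomorphism with $z(0)=0$ and ${\rm d}z={\rm d}w/F(w)$. Substituting ${\rm d}z$ for ${\rm d}w/F(w)$ in the display above immediately yields \eqref{eqn:hcmus_o1}.

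In the second case $F(0)=0$, Lemma \ref{lem:hcmu_sm} tells us that $F$ has a simple zero at the origin and that $\sigma:=1/F'(0)$ is a nonzero real number; equivalently, $\frac{{\rm d}w}{F(w)}$ is a meromorphic one-form with a simple pole at $w=0$ of residue $\sigma$. The key step is to apply the local normal form for such a one-form---the $n=1$ instance of \cite[Theorem 6.4]{Str1984}, which is already invoked inside the proof of Lemma \ref{lem:hcmu_sm}, and is equivalent to Poincar\'e's linearization of the holomorphic vector field $F(w)\frac{\partial}{\partial w}$ at its nondegenerate zero---to produce a biholomorphism $z=z(w)$ with $z(0)=0$ for which $\frac{{\rm d}w}{F(w)}=\sigma\,\frac{{\rm d}z}{z}$. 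Substituting this identity into the intrinsic form above produces the first two lines of \eqref{eqn:hcmus_o2}, and the third line $-K_{0}^{3}/3+CK_{0}+C'=0$ is exactly item (1) of Lemma \ref{lem:hcmu_sm}.

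I expect the main obstacle to lie in cleanly articulating the normal-form step in the second case. Everything else is a substitution of the intrinsic rewriting of \eqref{eqn:hcmus} through a coordinate transformation; the existence of the trivialization in the first case is an elementary antiderivative, while in the second it rests on the fact that the residue is the only local invariant of a meromorphic one-form with a simple pole, a fact conveniently packaged by the Strebel theorem the authors have already cited. No new analytic difficulty should arise, so the proof should reduce to a careful bookkeeping of the two substitutions.
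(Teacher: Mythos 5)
Your proposal is correct and follows essentially the same route as the paper: rewrite \eqref{eqn:hcmus} intrinsically in terms of the one-form $\omega={\rm d}w/F(w)$, use Lemma \ref{lem:hcmu_sm} to see that $\omega$ has at most a simple pole with nonzero real residue when $F(0)=0$, and normalize $\omega$ to ${\rm d}z$ or $\sigma\,{\rm d}z/z$ by a holomorphic coordinate change before substituting back. Your version merely makes the two cases and the antiderivative construction more explicit than the paper does.
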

near $z=0$.
\begin{proof} Using notions in  Lemma \ref{lem:hcmu_sm} and its proof, we can see that
$\omega:=\frac{{\rm d}w}{F(w)}$ is a meromorphic one-form near $w=0$ and has at most simple pole at $w=0$ with real residue.
We rewrite the system \eqref{eqn:hcmus} as the following intrinsic form:
\begin{equation}
\label{eqn:hcmus_o}
\left\{
\begin{split}
\partial K&=\left(-\frac{K^3}{3}+CK+C'\right)\omega\\
g&=4\left(-\frac{K^3}{3}+CK+C'\right)|\omega|^2
\end{split}
\right.
\end{equation}
Under some holomorphic coordinate change $w\to z(w)$ that preserves the origin,  
$\omega=\frac{{\rm d}w}{F(w)}$ could be simplified be the form of ${\rm d}z$ or $\sigma \frac{{\rm d}z}{z}$ with $\sigma\in {\Bbb R}^\times$.
Substituting this into \ref{eqn:hcmus_o}, we are done.
\end{proof}

\begin{definition}
We refer to an extK germ that satisfies \eqref{eqn:hcmus_o1} as {\it generic}, and one satisfying \eqref{eqn:hcmus_o2} as {\it exceptional}.
It is worth noting that neither the generic germ nor the exceptional one possesses  constant curvature.
\end{definition}

Given the pivotal role played by the cubic polynomial $-t^3/3 + Ct + C'$ in the subsequent discussion, we provide the Cardano formula \cite{Card} here for the convenience of the readers.

\begin{lemma}
\label{lem:Card}
Denote by $D:=27\big(4C^3-9(C')^2\big)$ the discriminant  of the cubic polynomial $-t^3/3+Ct+C'$ with 
$C,C'\in {\Bbb R}$. There hold the following statements relevant
to its real roots of {\rm :}
\begin{enumerate}

\item when $D>0$, all three roots of $p(t)$ are real and distinct.
\item when $D=0$, all roots are real;
\begin{itemize}
    \item when $CC'\not=0$, there is one double and one single root;
    \item and when $C=C'=0$, there is one triple root.
\end{itemize}
\item when $D<0$, all three roots are distinct, one of them being a real number  and the other two --- conjugate complex numbers not lying in ${\Bbb R}$.

\end{enumerate}
    
\end{lemma}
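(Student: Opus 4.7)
The plan is to reduce the classification of real roots of $p(t) = -t^3/3 + Ct + C'$ to a calculus analysis of its critical points and then match the three sign regimes of $D = 27\bigl(4C^3 - 9(C')^2\bigr)$ to the three root patterns.

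First I would compute $p'(t) = -t^2 + C$. If $C \le 0$, then $p$ is (weakly) monotonically decreasing on $\mathbb{R}$, so it has a unique real root, which is a triple root precisely when both $p'$ and $p$ vanish there, i.e. when $C = C' = 0$. If $C > 0$, then $p$ has a local maximum at $t = \sqrt{C}$ of value $M_+ := \tfrac{2}{3}C^{3/2} + C'$ and a local minimum at $t = -\sqrt{C}$ of value $M_- := -\tfrac{2}{3}C^{3/2} + C'$.

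Next, for $C > 0$, the real-root structure of $p$ is determined by the sign of $M_+ M_- = (C')^2 - \tfrac{4}{9}C^3 = -D/243$. Hence three distinct real roots correspond to $M_+ M_- < 0$, i.e. $D > 0$; one double and one simple real root correspond to $M_+ M_- = 0$ with the two extrema not both vanishing, i.e. $D = 0$ with $C > 0$ (and then automatically $C' \ne 0$, since vanishing of both $M_\pm$ would force $C^{3/2} = 0$); and a single real root together with two conjugate nonreal ones correspond to $M_+ M_- > 0$, i.e. $D < 0$.

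Finally I would tie up the remaining cases. Any $C \le 0$ with $(C,C') \ne (0,0)$ forces $D = 108 C^3 - 243(C')^2 < 0$, which places it in case (3). The only leftover possibility is $C = C' = 0$, which gives $D = 0$ and the triple root $t = 0$, thereby completing case (2). There is no serious obstacle here beyond careful bookkeeping; the lemma is essentially Cardano's formula specialized to the present polynomial, and the real-root count can alternatively be read off from the intermediate value theorem applied on the monotone pieces cut out by $p'$.
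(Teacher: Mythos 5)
Your proposal is correct, and it necessarily takes a different route from the paper, because the paper gives no proof at all: Lemma~\ref{lem:Card} is stated purely as a recollection of the classical Cardano/discriminant theory, with a citation in place of an argument. Your approach replaces that citation by a self-contained calculus proof: locating the critical points of $p(t)=-t^3/3+Ct+C'$ at $t=\pm\sqrt{C}$ (for $C>0$), computing the critical values $M_{\pm}=\pm\tfrac{2}{3}C^{3/2}+C'$, and observing the key identity $M_+M_-=(C')^2-\tfrac{4}{9}C^3=-D/243$, which converts the sign of $D$ directly into the sign pattern of the extrema and hence, via the intermediate value theorem on the monotone pieces, into the count of real roots. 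The bookkeeping is right: $D>0$ forces $C>0$ and $M_-<0<M_+$, giving three distinct real roots; $D=0$ with $C>0$ forces $C'\ne 0$ and exactly one vanishing extremum (they cannot both vanish since $M_+-M_-=\tfrac{4}{3}C^{3/2}>0$), giving a double plus a simple root; $C\le 0$ with $(C,C')\ne(0,0)$ gives $D<0$ and a single simple real root with a conjugate nonreal pair; and $C=C'=0$ gives the triple root. The only cosmetic imprecision is the phrase that a common zero of $p$ and $p'$ is ``a triple root''---in general that only guarantees a double root---but in the regime $C\le 0$ the condition $p'(t_0)=0$ forces $C=0$ and $t_0=0$, whence $p''(t_0)=0$ as well, so the conclusion stands. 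What your argument buys is transparency and elementarity; what the paper's citation buys is brevity, since the lemma is standard.
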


\section{Classification of extK germs}

We establish two theorems that provide classifications for generic and exceptional germs, respectively. Consequently, we promptly deduce Theorem 1.1. In addition, we explicitly characterize, in Example 3.3., germs originating from HCMU metrics with cone or cusp singularities on compact Riemann surfaces. 






\begin{theorem}
\label{lemma:gen_1}
{\rm (Classification of generic germs)} Let $C$ and $C'$ be two real numbers. 

{\rm (1)}\quad The existence of a generic extK germ $g$ satisfying \eqref{eqn:hcmus_o1} near $z=0$, with its Gaussian curvature $K$ attaining $K_0\in \mathbb{R}$ at $z=0$, is contingent upon the condition $-K_0^3/3+CK_0+C' > 0$. Furthermore, such a germ is uniquely determined by $C, C'$ and $K_0$.

{\rm (2)}\quad  The space ${\frak M}_{\rm gx}$ of generic extK germs can be viewed as the domain
\[\left\{(C, C', K_0)\in \mathbb{R}^3 : -K_0^3/3+CK_0+C' > 0 \right\}\] in $\mathbb{R}^3$. Additionally, the mapping
$(C, C', K_0)\mapsto \big(C,\,\ln\,(-K_0^3/3+CK_0+C),\, K_0\big)$
establishes a diffeomorphism from ${\frak M}_{\rm gx}$   onto ${\Bbb R}^3$.

\end{theorem}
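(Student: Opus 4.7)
The plan is to establish that assigning to each generic germ its triple $(C, C', K_0)$ is a bijection onto
$\Omega := \{(C, C', K_0) \in {\Bbb R}^3 : -K_0^3/3 + C K_0 + C' > 0\}$, transport the smooth structure from $\Omega$ to ${\frak M}_{\rm gx}$ through this identification, and then compose with the elementary map to ${\Bbb R}^3$. Writing $P(t) := -t^3/3 + Ct + C'$, the necessity $P(K_0) > 0$ is immediate from the second equation of \eqref{eqn:hcmus_o1} since $g$ must be a positive K\"ahler metric at $z = 0$.

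For existence and uniqueness of a germ for each $(C, C', K_0) \in \Omega$, the key observation is that since $K$ is real-valued, conjugating $\partial K = P(K)\,{\rm d}z$ yields $\bar\partial K = P(K)\,{\rm d}\bar z$, so ${\rm d}K = 2P(K)\,{\rm d}x$ with $z = x + iy$; hence $K$ depends only on $x$ and satisfies the real ODE $K'(x) = 2P(K(x))$ with $K(0) = K_0$. Standard ODE theory supplies a unique smooth real solution on a neighborhood of $0$, staying in $\{P > 0\}$ by continuity of $P\circ K$. I would then set $g := 4P(K(x))\,|{\rm d}z|^2$ and verify it is extK by noting that the holomorphic vector field of Lemma \ref{lem:hcmu_sm} reduces in these coordinates to $F(z)\,\partial_z \equiv \partial_z$, trivially holomorphic. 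Smoothness in parameters of the ODE solution then automatically makes the assignment $(C, C', K_0) \mapsto g$ smooth, yielding a diffeomorphism $\Omega \cong {\frak M}_{\rm gx}$.

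For injectivity --- that distinct triples give inequivalent germs --- I would argue that the normal coordinate $z$ in \eqref{eqn:hcmus_o1} is intrinsic: by Lemma \ref{lem:hcmu_sm_2} it is obtained by integrating from $0$ the intrinsic holomorphic one-form $\omega = {\rm d}w/F(w)$, so two normal forms arising from isometric germs must coincide in $z$ and in their defining data. Finally, composing with $(C, C', K_0) \mapsto \big(C, \ln P(K_0), K_0\big)$, which is smooth with explicit smooth inverse $(A, B, D) \mapsto \big(A, e^B + D^3/3 - AD, D\big)$, gives the desired diffeomorphism onto ${\Bbb R}^3$. The main obstacle I anticipate is the injectivity step: beyond the normal-coordinate uniqueness, one may need the intrinsic characterization $C = K^2 + \Delta_g K$ (from \eqref{eqn:extH_C2}) and $C' = F K_{,w} + K^3/3 - CK$ (from the proof of Lemma \ref{lem:hcmu_sm}), or alternatively a direct computation that any holomorphic $\Phi$ equating two normal forms must satisfy $\Phi'(z) = P_1(K_1(z))/P_2(K_1(z))$, a function of ${\rm Re}(z)$ alone and hence constant, which together with matching leading coefficients forces $P_1 = P_2$ and $\Phi = \mathrm{id}$.
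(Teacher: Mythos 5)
Your proposal is correct and follows essentially the same route as the paper: the paper likewise reduces the first equation of \eqref{eqn:hcmus_o1} to $\mathrm{d}K/p(K)=\mathrm{d}(z+\bar z)$ with $p(t)=-t^3/3+Ct+C'$, inverts a primitive of $1/p$ to realize $K$ as a strictly monotone function of $x=\mathrm{Re}\,z$, and sets $g=4p(K)\,|\mathrm{d}z|^2$ --- your ODE formulation $K'(x)=2P(K)$ is the same computation, and your intrinsic-invariant argument for injectivity merely fills in what the paper dispatches as ``a direct consequence of (1)''. The one step that both you and the paper leave implicit is the verification that the Gaussian curvature of the constructed metric $4P(K)\,|\mathrm{d}z|^2$ really equals $K$ (equivalently that $F\equiv 1$, which your ``trivially holomorphic'' remark presupposes); it is the short computation $K_g=-4e^{-2\phi}\phi_{z\bar z}=-\tfrac{1}{2}P''(K)=K$, which crucially uses the leading coefficient $-1/3$ of the cubic.
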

\begin{proof}
(1) Denote $p(t):=-t^3/3+Ct+C$. Recall that such a generic extK germ $g$ and its Gaussian curvature function $K$
satisfy the relation of $g=p(K)|{\rm d}z|^2$ near $z=0$. Restricting it to $z=0$, we find $p\big(K(0)\big)>0$.
Choose $K_0\in {\Bbb R}$ with $p(K_0)>0$, and denote by ${\frak P}(t)$ the primitive of $\frac{1}{p(t)}$ near $K_0$ such
that ${\frak P}(K_0)=0$.  Rewriting the first
equation of \eqref{eqn:hcmus_o1} in the form of 
$\frac{{\rm d}K}{p(K)}={\rm d}\big(z+\bar z\big)$ and integrating it, we find by  ${\frak P}(K_0)=0$ that 
${\frak P}(K)=z+\bar z$ near $z=0$. Since $p(K_0)>0$, we could choose $\delta>0$ such that
$p(t)>0$ and ${\frak P}(t)$ is strictly monotone increasing for all $K_0-\delta\leq t\leq K_0+\delta$.  
Then we obtain
a unique real-valued function $K=K(z,\bar z)$ on the disk 
$$2|z|<\min\big(|{\frak P}(K_0-\delta)|,\, |{\frak P}(K_0+\delta)|\big),$$ where
$K(0)=K_0$, ${\frak P}(K)=z+{\bar z}$ and $p(K)>0$. Hence, the metric $g=4p(K)|{\rm d}z|^2$ on this disc gives
the desired germ. 

(2) is a direct consequence of (1).
\end{proof}



\begin{theorem}
\label{lemma:ex}
{\rm (Classification of exceptional germs)}
Let $C,\, C'$ and $\sigma$ be three real numbers and $\sigma\not=0$. 

{\rm (1)}\quad The existence of an exceptional extK germ $g=e^{2\phi}|{\rm d}z|^2$, satisfying \eqref{eqn:hcmus_o2} near $z=0$---where $K=K_g$ and $K_0=K(0)$---is contingent upon the conditions{\rm :}

\begin{equation}
\label{eqn:ex}
\left\{
\begin{split}
-\frac{K_0^3}{3}+CK_0+C'&=0\\
-K_0^2+C&\not=0\\
\frac{1}{-t^3/3+Ct+C'}&-\frac{\sigma}{t-K_0}\  \text{\rm is smooth near}\ t=K_0
\end{split}
\right.\, .
\end{equation}
All such germs forms a real one-parameter family $\big\{g_\lambda:\lambda\in (0,\, +\infty)\big\}$, where 
\[\lambda:=\lim_{z\to 0}\,\frac{({\rm sgn}\,\sigma)\cdot (K_{g_\lambda}-K_0)}{|z|^2}>0\]
is referred to as  the {\rm fifth invariant} 
of germ $(g,\, z)$ at $z=0$. 
$K_{g_\lambda}$ achieves its local minimum (maximum) at $z=0$ if and only if $\sigma>0$ ($\sigma<0$).

{\rm (2)}\quad 
For  all $\lambda\in (0,\,+\infty)$,  these germs of $g_\lambda$ are mutually distinct.

{\rm (3)} \quad 
The space ${\frak M}_{\rm ex}$ of exceptional extK germs can be regarded as the product of $(0,\, +\infty)$ and the real two-dimensional connected sub-manifold consisting of quadruples $(C, C', K_0, \sigma)$ in ${\Bbb R}^3\times {\Bbb R}^\times$ satisfying \eqref{eqn:ex}. Moreover, ${\frak M}_{\rm ex}$ has two components, each of which is diffeomorphic to ${\Bbb R}^3$.

\end{theorem}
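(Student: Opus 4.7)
The plan is to reduce the system \eqref{eqn:hcmus_o2} to a radial ODE for the Gaussian curvature, integrate it, verify smoothness of the resulting metric at $z=0$, and then identify the moduli. For part (1), I would first unpack the three conditions of \eqref{eqn:ex}: the first says $K_0$ is a root of $p(t):=-t^3/3+Ct+C'$, the second $p'(K_0)=C-K_0^2\neq 0$ makes it a simple root, and the third then forces $\sigma$ to be the residue of $1/p$ at $K_0$, namely $\sigma=1/p'(K_0)=1/(C-K_0^2)$. Next I would rewrite the first equation of \eqref{eqn:hcmus_o2} in real form as $dK/p(K)=\sigma\,d\log|z|^2$, so $K$ depends only on $|z|$. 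Setting $\Phi(K):=\int_{K_0}^K\bigl[1/p(t)-\sigma/(t-K_0)\bigr]dt$ (smooth near $K_0$ by the third condition of \eqref{eqn:ex}), integration yields the implicit relation
\[\sigma\log|K-K_0|+\Phi(K)=\sigma\log\bigl(\lambda|z|^2\bigr)\]
with integration constant $\lambda>0$. The implicit function theorem then produces a unique smooth radial solution $K=K(|z|^2)$ near $0$ with $K(0)=K_0$, subject to the sign constraint $(K-K_0)/\sigma>0$ coming from positivity of $p(K)$ (hence of $g$). A short Taylor expansion gives $4\sigma^2 p(K)/|z|^2\to 4|\sigma|\lambda>0$, so $g=4\sigma^2 p(K)|dz|^2/|z|^2$ extends smoothly to a positive K\"ahler metric at the origin and $({\rm sgn}\,\sigma)(K-K_0)/|z|^2\to\lambda$; the sign of $\sigma$ then dictates whether $K$ attains a local minimum or maximum at $0$.

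For part (2), mutual distinctness is where I expect the main difficulty. The approach is to show that any biholomorphism $\Phi$ with $\Phi(0)=0$ realizing an equivalence $\Phi^*g_{\lambda'}=g_\lambda$ must send the intrinsic meromorphic one-form of Lemma \ref{lem:hcmu_sm} for $g_{\lambda'}$ onto that for $g_\lambda$; once both are put in Strebel-normal form $\sigma\,dz/z$ (Lemma \ref{lem:hcmu_sm_2}), the condition $\Phi^*(\sigma\,dw/w)=\sigma\,dz/z$ forces $\Phi(z)=cz$ for some $c\in{\Bbb C}^\times$. The hard part is then to exploit the remaining intrinsic data of the germ --- in particular the metric $g$ evaluated at $0$ as a bilinear form on $T_0{\Bbb C}$, which gives $e^{2\phi(0)}=4|\sigma|\lambda$ --- to eliminate the $|c|$ ambiguity, so that $\lambda$ is pinned down as a genuine invariant of the germ (the \emph{fifth invariant}). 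This then separates the $g_\lambda$'s.

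Part (3) is then bookkeeping. Eliminate $C'=K_0^3/3-CK_0$ via the first line of \eqref{eqn:ex} and $\sigma=1/(C-K_0^2)$ via the third, so the admissible quadruples $(C,C',K_0,\sigma)$ form a two-dimensional submanifold of ${\Bbb R}^3\times{\Bbb R}^\times$ parametrized by $(C,K_0)\in{\Bbb R}^2\setminus\{C=K_0^2\}$, i.e., the complement of a parabola. This splits into exactly two open connected components, $\{C>K_0^2\}$ (where $\sigma>0$) and $\{C<K_0^2\}$ (where $\sigma<0$), each diffeomorphic to ${\Bbb R}^2$ via $(C,K_0)\mapsto(\log|C-K_0^2|,K_0)$. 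Multiplying by the $\lambda$-factor and applying $\lambda\mapsto\log\lambda$ exhibits each component of ${\frak M}_{\rm ex}$ as ${\Bbb R}^3$. Together with the single ${\Bbb R}^3$-component of Theorem \ref{lemma:gen_1}, this yields the three ${\Bbb R}^3$-components claimed in Theorem 1.1.
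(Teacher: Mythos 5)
Your treatment of parts (1) and (3) follows the paper's route (reduce the first line of \eqref{eqn:hcmus_o2} to the radial ODE ${\rm d}K/p(K)=\sigma\,{\rm d}\ln|z|^2$, integrate, check positivity of $p(K)$, then parametrize $\Sigma$ by $(K_0,\,C-K_0^2)$), and your sufficiency argument in (1) is if anything cleaner than the paper's, since you explicitly verify smoothness of the metric at the origin. However, in (1) you only restate the conditions \eqref{eqn:ex} as "unpacking"; the actual content of the necessity direction is to \emph{rule out} that $K_0$ is a double or triple root of $p(t)=-t^3/3+Ct+C'$ and to \emph{prove} $\sigma$ equals the residue of $1/p$ at $K_0$. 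The paper does this in three separate contradiction arguments (Steps 1--3), each integrating the ODE and showing $e^{2\phi}$ would blow up at $z=0$, or comparing the decay rate $|K-K_0|\sim |z|^{2\sigma/\alpha}$ forced by the ODE against the rate $|K-K_0|\sim|z|^2$ forced by smoothness of $g$ at the origin. None of this appears in your sketch.

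The genuine gap is in part (2), exactly where you flagged "the hard part," and your proposed patch cannot work. Having reduced (correctly) to $\Phi(z)=cz$, you hope to pin down $|c|$ by comparing $e^{2\phi(0)}=4|\sigma|\lambda$. But under $z\mapsto cz$ the conformal factor at the origin and the quantity $\lim_{z\to 0}({\rm sgn}\,\sigma)(K-K_0)/|z|^2$ scale by the \emph{same} factor $|c|^2$, so the pullback relation at the origin yields only $\lambda=|c|^2\lambda'$ --- the identical equation you already get from the curvature asymptotics --- and this is solvable for $c$ whatever $\lambda,\lambda'>0$ are. Worse, since $\Phi^*\bigl(|{\rm d}w|^2/|w|^2\bigr)=|{\rm d}z|^2/|z|^2$ and $K_{g_{\lambda'}}(cz)$ solves the same radial ODE with limit $|c|^2\lambda'$, the uniqueness statement of your own Step i gives $\Phi^*g_{\lambda'}=g_{|c|^2\lambda'}$ as germs; the normal form $\sigma\,{\rm d}z/z$ is preserved by every rescaling $z\mapsto cz$, and $\lambda$ transforms nontrivially under it. So no quantity of the kind you invoke can separate the $g_\lambda$'s. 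Be aware that the paper's own Step ii sidesteps this by asserting that a conformal isometry $({\Bbb D}_\epsilon,g_\lambda)\to({\Bbb D}_\epsilon,g_\mu)$ fixing $0$ "must be a rotation" --- a Schwarz-lemma conclusion that requires the map to be a surjective self-map of a fixed disk, which a germ equivalence need not be. If you pursue this, you must confront the scaling symmetry $z\mapsto cz$ directly rather than hope an evaluation at the origin eliminates it.
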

\begin{proof}
\noindent (1) We prove at first that \eqref{eqn:ex} is a necessary and sufficient condition for the existence of exceptional germ satisfying \eqref{eqn:hcmus_o2} relevant to a given quadruple $(C,C',K_0,\sigma)\in {\Bbb R}^3\times {\Bbb R}^\times $.\\


{\sc Necessity}: \quad Let $g=e^{2\phi}|{\rm d}z|^2$ be an exceptional germ satisfying \eqref{eqn:hcmus_o2} relevant to 
quadruple $(C,C',K_0, \sigma)\in {\Bbb R}^3\times {\Bbb R}^\times$, $K$ the Gaussian curvature function of $g$. 
By Lemma \ref{lem:hcmu_sm}, we have $-K_0^3/3+CK_0+C'=0$. We shall prove the necessity in the following three sequential steps.

{\it Step 1}\quad We claim that {\it $C$ and $C'$ do not vanish simultaneously. } Otherwise, $C=C'=K_0=0$, and $g$ and $K$ satisfy
\begin{equation*}
\left\{
\begin{split}
\partial K&= -\frac{\sigma K^3{\rm d}z}{3z}\\
g&=-\frac{4\sigma^2 K^3}{3}\frac{|{\rm d}z|^2}{|z|^2}=e^{2\phi}|{\rm d}z|^2
\end{split}
\right.\, .
\end{equation*}
Rewriting the first equation of the preceding system as 
\[-3\frac{{\rm d}K}{K^3}=\sigma \left(\frac{{\rm d}z}{z}+ \frac{{\rm d}\bar z}{\bar z}\right)
=\sigma {\rm d}\left(\ln\, |z|^2\right), 
\]
we find that there exists a real constant $D$ such that $\displaystyle{K^2=\frac{3}{2\left(\sigma\ln\, |z|^{2}+D\right)}}$ 
and $\displaystyle{\lim_{z\to 0}\, e^{2\phi(z,\bar z)}=\lim_{z\to 0}\, \frac{-4\sigma^2 K^3}{3|z|^2}=+\infty}$. Contradiction!

{\it Step 2}\quad {\it We shall show  $-K_0^2+C\not=0$.} Otherwise, by Step 1, $K_0\not=0$ must be a double root of the cubic polynomial $-t^3/3+CK+C'$, whose third root equals $-2K_0$. 
Since $\displaystyle{\frac{1}{-t^3/3+Ct+C'}=\frac{\alpha}{(K-K_0)^2}+\frac{\beta}{K-K_0}+\frac{\gamma}{K+2K_0}}$ for some $\alpha, \beta,\gamma\in {\Bbb R}$ and $\alpha\not=0$,
we could rewrite the system \eqref{eqn:hcmus_o2} as 
\begin{equation*}
\left\{
\begin{split}
{\rm d}\big(\sigma\,\ln\,|z|^2\big)&=\left(\frac{\alpha}{(K-K_0)^2}+\frac{\beta}{K-K_0}+\frac{\gamma}{K+2K_0}\right){\rm d}K\\
g&=-\frac{4\sigma^2}{3}\left(K-K_0\right)^2(K+2K_0)\frac{|{\rm d}z|^2}{|z|^2}=e^{2\phi}|{\rm d}z|^2\\
\end{split}
\right.\, .
\end{equation*}
Integrating the first equation of the preceding system, we obtain that as $z\to 0$,  
\begin{eqnarray*}
\frac{\alpha}{K-K_0}&=&-\sigma \ln\,|z|^2+\beta \ln\, |K-K_0|+\gamma\, \ln\, |K+2K_0|+{\rm Const}\\
&=&-\sigma \ln\,|z|^2+o\left(\frac{1}{|K-K_0|}\right),
\end{eqnarray*}
which implies 
$\displaystyle{\lim_{z\to 0} (K-K_0) \cdot (\ln\, |z|^2)=-\alpha/\sigma}.$
Combining it with the second equation of the preceding system, we find $\lim_{z\to 0} e^{2\phi(z,\bar z)}=+\infty$. Contradiction!
We note in passing that Step 1 could also be proved by this argument. 

{\it Step 3}\quad Since $K_0$ is a simple root of $-t^3/3+CK+C'$ by the preceding two steps, there exists a unique nonzero real number $\alpha$ such that
near $t=K_0$, $$\displaystyle{\frac{1}{-t^3/3+CK+C'}-\frac{\alpha}{t-K_0}}$$ equals a smooth function, say $f(t)$.
{\it We shall prove $\sigma=\alpha$.} To this end, we rewrite the system \eqref{eqn:hcmus_o2} as 
\begin{equation*}
\left\{
\begin{split}
{\rm d}\big(\sigma\,\ln\,|z|^2\big)&=\left(\frac{\alpha}{K-K_0}+f(K)\right){\rm d}K\\
g&=-\frac{4\sigma^2}{3}\left(K-K_0\right)(K^2+K_0K+b)\frac{|{\rm d}z|^2}{|z|^2}=e^{2\phi}|{\rm d}z|^2\\
\end{split}
\right.\, ,
\end{equation*}
where $3C'+bK_0=0$. Integrating the first equation
of the preceding system, we obtain that  $\displaystyle{\lim_{z\to 0}\, |K-K_0|/(|z|^{2})^{\sigma/\alpha}}$ exists as a 
positive number, say $\lambda$. On the other hand,  $\lim_{z\to 0} |K-K_0|/|z|^2$ exists as a positive number
by using the second equation of this system. This implies that $\sigma/\alpha=1$ and $K(z,\overline z)-K_0$ equals
$\pm z\overline z$ times a bounded positive smooth function near $z=0$.  Recalling that 
$\frac{z}{\sigma}=F(z)=4K_{,z}=4e^{-2\phi}K_{\overline z}$, we obtain that {\it $K$ achieves the local 
minimum (maximum) value at $z=0$ if $\sigma>0$} ($\sigma<0$).\\

{\sc Sufficiency}:\quad  Choose a quadruple $(C,C',K_0,\sigma)\in {\Bbb R}^4$ satisfying \eqref{eqn:ex}. 
Recall that the desired exceptional germ $g$ should satisfy the system \eqref{eqn:hcmus_o2}, which we could rewrite as  
\begin{equation}
\label{eqn:hcmus_o3}
\left\{
\begin{split}
{\rm d}\big(\sigma\,\ln\,|z|^2\big)&=\left(\frac{\sigma}{K-K_0}+f(K)\right){\rm d}K\\
g&=4\sigma^2\left(-\frac{K^3}{3}+CK+C'\right)\frac{|{\rm d}z|^2}{|z|^2}\\
\end{split}
\right.\, ,
\end{equation}
where $f(t)$ is a smooth function near $t=K_0$. Since $K$ achieves its minimum (maximum) $K_0$ at $z=0$ as 
$\sigma>0 (<0)$, there exists $\delta>0$ such that 
$\sigma/(t-K_0)+f(t)$ is strictly positive as $t$ varies in $(K_0,\, K_0+\delta)$ \big($(K_0-\delta,\, K_0)$\big),
where any primitive of $\sigma/(t-K_0)+f(t)$  is strictly monotone increasing and diverges to $-\infty$ ($+\infty$) as $t\to K_0+0$
($t\to K_0-0$). 

Let us assume $\sigma>0$ in the left part of the proof of the sufficiency without loss of generality. 
Choose $\xi\in {\Bbb C}^\times$, $K_\xi\in (K_0,\, K_0+\delta)$, and a primitive $F(t)$ of $\sigma/(t-K_0)+f(t)$ in  $(K_0,\, K_0+\delta)$ which
is strictly monotone increasing and diverges to $-\infty$ as $t\to K_0+0$. Hence the following initial value problem of differential  equation

\begin{equation*}
\left\{
\begin{split}
{\rm d}\big(\sigma\,\ln\,|z|^2\big)&=\left(\frac{\sigma}{K-K_0}+f(K)\right){\rm d}K\\
K(\xi)&=K_\xi\\
\end{split}
\right.
\end{equation*}
has a unique solution $K=K(z,\bar z)$ in $\{0<|z|\leq |\xi|\}$ such that 
\begin{equation}
\label{eqn:sol_ode}
F(K)=\sigma\,\ln \,|z|^2-\left(\sigma\,\ln\,|\xi|^2-F(K_\xi)\right)
\end{equation}
is a radial function with respect to $|z|$.  
In particular,  $K(z,\bar z)$ is also a radial function in $\{0<|z|\leq |\xi|\}$ and 
$\displaystyle{\lim_{z\to 0}\, (K-K_0)/|z|^2}$ exists as a positive number, say $\lambda$.
Hence we find a desired exceptional germ $g$ living in the closed disk $\{|z|\leq |\xi|\}$ by substituting this solution to the second equation of system \eqref{eqn:hcmus_o3}.  We note in passing that $\lambda$ is termed the {\it fifth invariant} of $g$, whose metric tensor  is also 
radial with respect to $|z|$.\\

(2)\quad   To simplify the notations, we shall only consider the case of $\sigma>0$.

\begin{enumerate}
    \item[{\sc Step i}] {\it If two exceptional germs $g_1=e^{2\phi_1}|{\rm d}z|^2$ and $g_2=e^{2\phi_2}|{\rm d}z|^2$ satisfy \eqref{eqn:ex} and have the same fifth invariant, say $\lambda>0$,  then $K_{g_1}\equiv K_{g_2}$ and then $\phi_1\equiv \phi_2$ near $z=0$. In this way, we obtain a family of exceptional germs 
    $\{g_\lambda:\lambda>0\}$.} We shall prove this  by using the notations in the proof of the sufficiency part. Since $K_0$ is a simple root of the cubic polynomial $-t^3/3+Ct+C'$, recalling that $F(t)$ is a primitive
    of $\displaystyle{\frac{1}{-t^3/3+CK+C'}}$ in $(K_0,\, K_0+\delta)$, we find that
    $\sigma\, G(t):=F(t)-\sigma\, \ln\,(t-K_0)$ is smooth and bounded in $[K_0-\delta,\, K_0+\delta]$ as $\delta>0$ is small enough. 
    Choose $\epsilon>0$ and $\xi$ in ${\Bbb D}_\epsilon:=\{|z|<\epsilon\}$, where both of $g_1$ and $g_2$ are defined.
    The problem could be reduced to  showing that $K_{g_1}(\xi)=K_{g_2}(\xi)$. In fact, we rewrite \eqref{eqn:sol_ode} as 
    \[\sigma\,\ln\,(K_{g_1}-K_0)+\sigma\, G(K_{g_1})=\sigma\,\ln \,|z|^2-\Big(\sigma\,\ln\,|\xi|^2-F\big((K_{g_1})(\xi)\big)\Big),\]
    and find that the fifth invariant of $g_1$ equals
    \[\lim_{z\to 0}\,\frac{K_{g_1}-K_0}{|z|^2}=|\xi|^{-2\sigma} \cdot \exp\,\Big(F\big((K_{g_1})(\xi)\big)/\sigma-G(K_0)\Big).\]
    Since $g_1$ has the same fifth invariant as $g_2$ and $F$ is strictly monotone increasing on $(K_0,\, K_0+\delta)$, 
    $K_{g_2}(\xi)=K_{g_2}(\xi)$ by the preceding equation. Hence we obtain that $K_{g_1}\equiv K_{g_2}$ by \eqref{eqn:sol_ode} and then $\phi_1\equiv \phi_2$ in $\{|z|\leq |\xi|\}$ by \eqref{eqn:hcmus_o3}.

    \item[{\sc Step ii}] {\it For any $\lambda$ and $\mu$ in $(0,\,+\infty)$, $g_\lambda$ is isometric to $g_\mu$ near $z=0$ iff $\lambda=\mu$. }
  The proof goes as follows. Suppose that both $g_\lambda$ and $g_\mu$ are defined in ${\Bbb D}_\epsilon$ for some $\epsilon>0$. Then 
  the isometry $\psi:({\Bbb D}_\epsilon,\, g_\lambda)\to ({\Bbb D}_\epsilon,\, g_\mu)$ is conformal and preserves the origin. Hence it must be a rotation with respect to the origin. Then we have $g_\lambda=g_\mu$ since the metric tensor of $g_\mu$ is radial with respect to $|z|$. \\

\end{enumerate}

\noindent (3)\quad  By the proof of (1), the last components $\sigma$ in quadruples
$(C, C', K_0, \sigma)$ in ${\Bbb R}^3\times {\Bbb R}^\times$ satisfying \eqref{eqn:ex}
is a smooth function of the first three components $(C, C', K_0)$. Hence, it suffices to show that the set $\Sigma$ consisting of tuples $(C, C', K_0)\in {\Bbb R}^3$ satisfying 
\[\left\{\begin{split}
0&=-\frac{K_0^3}{3}+CK_0+C'\\
0&\not=-K_0^2+C
\end{split}
\right.\]
is a smooth surface of ${\Bbb R}^3$ diffeomorphic to ${\Bbb R}\times {\Bbb R}^\times$.
Actually, since 
\[\frac{\partial }{\partial K_0}\left(-\frac{K_0^3}{3}+CK_0+C'\right)=-K_0^2+C\]
nowhere vanishes 
on $\Sigma$, by the implicit function theorem, $\Sigma$ is expressed locally as the graph of some smooth function of
$(C,C')$, and it is a real 2-dimensional smooth submanifold of ${\Bbb R}^3$. We claim that {\it the following map
\[\Psi:\Sigma\to {\Bbb R}\times {\Bbb R}^\times,\quad (C,C', K_0)\mapsto (K_0,\, -K_0^2+C) \]
is a diffeomorphism.} Actually, its inverse mapping has the form of
\[\Psi^{-1}(K_0,\,t)=\left(t+K_0^2,\, -\frac{2K_0^3}{3}-tK_0,\,K_0\right).\]

{\it Proof of Theorem 1.1.} \quad Since each extK germ is either generic or exceptional, the space ${\frak M}$ of all extK germs
is the disjoint union ${\frak M}_{\rm gx}$ and ${\frak M}_{\rm ex}$. This theorem follows from Theorems 3.1. and 3.2.   \hfill{QED}

\begin{example} Let us recall at first that 
an {\it HCMU} metric $g$ is a conformal metric with finitely many cone or cusp singularities on a compact Riemann surface such that 
the complex gradient $K_{g,z}\frac{\partial}{\partial z}$ of the Gaussian curvature $K_g$ of $g$ is a holomorphic vector field outside the singularities of $g$. In this discussion, our focus is solely on HCMU metrics with non-constant curvature.
The classification work by Yingyi Wu and the first author \cite[Theorem 2.2]{CW2009} specifically addresses HCMU metrics with cone singularities on compact Riemann surfaces. Wu, the first author, and the third author \cite[Theorems 1.1-7]{CWX2015} extend this to include metrics with  cusp singularities.
Building on their classification theorems, Zhiqiang Wei and Yingyi Wu \cite[Theorem 1.1.]{WW2018} show that these HCMU metrics are pull-backs of HCMU footballs through a multi-valued holomorphic function with monodromy in ${\rm U}(1)$. Consequently, extK germs around smooth points arising from these HCMU metrics precisely coincide with those from HCMU footballs.
In this example, we aim to illustrate that {\it the extK germs arising from HCMU metrics constitute a real 3-dimensional manifold ${\frak O}$ with three components, two of which possess a boundary. Notably, this manifold represents a proper subset within the broader space ${\frak M}$, as delineated in Theorem 1.1.}\\

\noindent{\it Proof.}\quad Let ${\frak O}_{\rm gx}$ represent the subspace of generic germs in $\frak O$,  and ${\frak O}_{\rm ex}$ denote the subspace of
exceptional ones in $\frak O$. 
We describe ${\frak O}={\frak O}_{\rm gx}\sqcup {\frak O}_{\rm ex}$ explicitly as follows:  

\begin{itemize}
\item[(i)] Theorem \ref{lemma:gen_1} (2) says that the space ${\frak M}_{\rm gx}$ of all the generic germs are parametrized by the domain
$\left\{(C,\,C',\,K_0)\in {\Bbb R}^3:-K_0^3/3+CK_0+C'>0  \right\}$
in ${\Bbb R}^3$. 
Wei-Wu \cite[Section 2.2. (8-9)]{WW2018} succinctly outlined the differential systems satisfied by HCMU metrics, both with and without cusp singularities. Examining their summary, it becomes evident that a generic extended Kähler (extK) germ $g$ satisfying \eqref{eqn:hcmus_o1} originates from HCMU metrics if and only if there exist  $K_1 > 0$ and $K_2 \in [-K_1/2, K_1)$ such that
\begin{equation*}
\left\{
\begin{split}
-\frac{t^3}{3}+Ct+C'&=-\frac{1}{3}(t-K_1)(t-K_2)(t+K_1+K_2)\quad {\rm as\ polynomials}\\
K_0&\in (K_2,\, K_1)
\end{split}
\right.\, ,
\end{equation*}
i.e. 
\begin{equation*}
\left\{
\begin{split}
C&=\frac{K_1^2+K_1K_2+K_2^2}{3}\\
C'&=-\frac{K_1K_2(K_1+K_2)}{3}\\
K_0&\in (K_2,\, K_1)
\end{split}
\right.\, .
\end{equation*}
Hence, ${\frak O}_{\rm gx}$ is neither open nor closed in ${\frak M}_{\rm gx}$. Moreover, ${\frak O}_{\rm gx}$ is diffeomorphic to the upper half space
$\{(x,y,t)\in {\Bbb R}^3:t\geq 0\}$ of ${\Bbb R}^3$. 

\item[(ii)]  Theorem \ref{lemma:ex} (2) says that the space ${\frak M}_{\rm ex}$ of all the exception germs $g$ are parametrized all the quintuples
$(C,C',K_0, \sigma,\lambda)\in {\Bbb R}^3\times {\Bbb R}^\times \times (0,\,+\infty)$ satisfying \eqref{eqn:ex}. By \cite[Theorem 2.2.]{CW2009},
such a germ $g$ arises from an HCMU metrics with cone singularities if and only if 
there exist $K_1>0$ and $K_2\in (-K_1/2,\, K_1)$ such that there holds either 
\begin{equation*}
\left\{
\begin{split}
C&=\frac{K_1^2+K_1K_2+K_2^2}{3}\\
C'&=-\frac{K_1K_2(K_1+K_2)}{3}\\
K_0&=K_1\\
\sigma&=-\frac{3}{(K_1-K_2)(K_2+2K_1)}<0
\end{split}
\right.\  {\rm or}\quad  
\left\{
\begin{split}
C&=\frac{K_1^2+K_1K_2+K_2^2}{3}\\
C'&=-\frac{K_1K_2(K_1+K_2)}{3}\\
K_0&=K_2\\
\sigma&=-\frac{3}{(K_2-K_1)(K_1+2K_2)}>0
\end{split}
\right.\, .
\end{equation*}
 By \cite[Theorem 2.2.]{CW2009},
such a germ $g$ arises from an HCMU metric with at least one cusp singularity if and only if 
there exist $K_1>0$ and $K_2=-K_1/2$ such that 
\begin{equation*}
\left\{
\begin{split}
C&=\frac{K_1^2+K_1K_2+K_2^2}{3}\\
C'&=-\frac{K_1K_2(K_1+K_2)}{3}\\
K_0&=K_1\\
\sigma&=-\frac{4}{3K_1^2}<0    
\end{split}
\right.\, .
\end{equation*}
Depending on the sign of $\sigma\not=0$, we divide the space ${\frak O}_{\rm ex}$ of all exception germs from HCMU metrics into
two components, one diffeomorphic to ${\Bbb R}^3$ and a proper open subset of ${\frak M}_{\rm ex}$  corresponding to ${\rm sgn}\,\sigma=1$, 
and the other diffeomorphic to the upper half-space of ${\Bbb R}^3$, and neither open or closed in ${\frak M}_{\rm ex}$ corresponding to ${\rm sgn}\,\sigma=-1$.

\end{itemize}

\end{example}



\end{proof}

\section{Discussions}
We present the following two open problems pertaining to local extremal K\"ahler metrics in dimension one. 

\begin{prob} Classify {\it complete} extremal Kähler and non-Einstein metrics on open Riemann surfaces, such as ${\Bbb C}$ or ${\Bbb D}={z\in {\Bbb C}: |z|<1}$.
The existence of such a metric on ${\Bbb D}$ is currently unknown, although we lean towards a conjecture of non-existence. The sole known example on ${\Bbb C}$ is the unique HCMU
metric, featuring a cusp singularity at $\infty$ on the Riemann sphere (\cite[Section 8. Appendix]{Chen1999}).
\end{prob}

\begin{prob}
Explore smooth extremal K\"ahler and non-Einstein metrics $g=e^{2\phi}|{\rm d}z|^2$ with {\it finite area} on the punctured disk
${\Bbb D}^\times:=\{0<|z|<1\}$. We introduce the following notations:
$K=K_g=-4\phi_{z\bar z}e^{-2\phi},\quad F(z)=4e^{-2\phi}K_{\bar z},\quad \omega=\frac{{\rm d}z}{F}$.
Applying the reasoning from the initial two paragraphs of the proof of Lemma \ref{lem:hcmu_sm}, we obtain the following assertion:
{\it 
There exist two real constants $C$ and $C'$ such that on ${\Bbb D}^\times$,   
$\omega$ is a meromorphic one-form with at most simple poles, nowhere vanishing, 
and the metric $g$ satisfies}
\begin{equation*}
\left\{
\begin{split}
\partial K&=\left(-\frac{K^3}{3}+CK+C'\right)\omega,\\
g&=4\left(-\frac{K^3}{3}+CK+C'\right)|\omega|^2
\end{split}
\right.\, .
\end{equation*}
Does $\omega$ extend meromorphically to $z=0$? If yes, does $\omega$ have at most a simple pole at $z=0$? If yes again, 
can we characterize their local models near $z=0$ for such extremal K\"ahler metrics on ${\Bbb D}^\times$? 
By employing a sophisticated analysis of the elliptic PDEs relevant to extremal Hermitian metrics \cite{Chen1999, WZ2000},  Chang Shou Lin and Xiaohua Zhu \cite[Section 1]{LZ2002} provided an affirmative answer to the first two questions, assuming that the $L^2$ norm of $K_g$ is finite.
Consequently, they demonstrated that $z=0$ is either a cone or cusp singularity of $g$.
\end{prob}

\noindent{\bf Acknowledgements} 
B.X. extends his heartfelt gratitude to Gao Chen and Yingyi Wu for their invaluable discussions. Specifically, Gao Chen proposed a potential approach involving Functional Analysis, demonstrating the existence of a one-dimensional local K\"ahler metric $g$   that fulfills the conditions $\frac{\partial}{\partial \overline z}K_{g,zz}=0$ and $K_{g,zz}\not=0$.

\bibliographystyle{plain}
\bibliography{RefBase}
\end{document}